\documentclass[pdflatex,sn-mathphys-num]{sn-jnl}
\usepackage{graphicx}%
\usepackage{multirow}%
\usepackage{amsmath,amssymb,amsfonts}%
\usepackage{amsthm}%
\usepackage{mathrsfs}%
\usepackage[title]{appendix}%
\usepackage{xcolor}%
\usepackage{textcomp}%
\usepackage{manyfoot}%
\usepackage{booktabs}%
\usepackage{algorithm}%
\usepackage{algorithmicx}%
\usepackage{algpseudocode}%
\usepackage{listings}%

\usepackage{breqn}

\theoremstyle{thmstyleone}

\theoremstyle{thmstyletwo}%

\theoremstyle{thmstylethree}%

\newtheorem{thm}{Theorem}

\newtheorem{cor}{Corollary}
\newtheorem{rem}{Remark}

\begin{document}
	
	\title[Article Title]{Feasibility problem for the radii of Steiner 4-chains}
	
	\author[]{A.Diakvnishvili}
	\affil[]{Faculty of Business,Technology and Education, Ilia State University, Tbilisi, Georgia}
	
	\abstract{Motivated by the solution of Descartes famous problem on the chains of tangent circles given by D.Mathews and O.Zymaris, we give an algorithmic solution to the feasibility problem for radii of Steiner $4$-chains studied by K.Kirajiev. More precisely, we obtain a complete characterization of the ordered quadruples of positive numbers which coincide with a quadruple of curvatures of the circles forming a Steiner $4$-chain. Our approach makes essential use of the numerical invariants of Steiner chains introduced in a recent paper of R.Schwartz and S.Tabachnikov. We compute all invariants of Steiner $4$-chains and establish the algebraic relations between those invariants which are used in the proof of the main result. Several related observations and illustrative examples are also given} 
	
	\keywords{Steiner chain, parent circles, Steiner porism, poristic Steiner chains, Descartes circle theorem, invariant moments of curvatures, algebraic relations between invariants, symmetric Steiner chains, feasibility problem for Steiner chains}
	
	\maketitle
	\vspace*{0.25cm}\noindent{\small {\bf MSC 2010:} {52C35,  32S40.}}
	
	\section{Introduction}\label{sec1}
	
	A classical problem of Descartes’ formulated in his letters to Princess Palatine Elisabeth of Bohemia between 1643 and 1649 (cf. \cite{par}), in modern terms is concerned with the characterization of $n$-tuples of positive numbers which appear as the radii of a chain of externally tangent circles forming the so-called {\it $n$-flower} \cite{mazy}, which means that there exists a circle externally tangent to the circles considered. For brevity, we refer to this problem as Descartes Flower Problem (DFP). For $n=3$, the answer is trivial: any triple of positive numbers yields a solution to DFP. For arbitrary $n\geq 4$, highly nontrivial general necessary conditions were given in a seminal paper by  D.Mathews and O.Zymaris \cite{mazy}. Further related results in the context of circle packings can be found in preprint \cite{mazy2}. However, to the best of our knowledge a general criterion for the solvability of DFP, in other words a {\it feasibility criterion for radii of $n$-flowers}, in the general case is still unknown. \\
	
	A version of this problem in the case of Steiner $4$-chains was studied in a prize-winning paper of K.Kiradjiev of Oxford University \cite{kir}.  K.Kiradjiev described an approach based on the use of M\"obius transformations and gave conditions for a quadruple of positive numbers to coincide with a quadruple of curvatures of the circles forming a Steiner $4$-chain \cite{kir}. However the discussion in \cite{kir} is rather brief and the proof that the given conditions yield a genuine criterion is only outlined without giving the full detail. One of the aims of the present note is to give a detailed solution to the same problem based on the use of invariants of Steiner chains introduced in a recent paper of R.Schwartz and S.Tabachnikov \cite{scta}. Our approach is basically the same as in \cite{bisa, bidi, dikh} and relies on a few general results on the Steiner chains given in \cite{scta} and \cite{yiu}. We show that our results are consistent with the results of \cite{kir}, and our approach suggests further generalizations for bigger values of $n$. \\
	
	In the next section we present the necessary background on the invariants of Steiner chains studied in \cite{lamawi,scta}. In Section 3 we give the explicit formulas for the invariant bending moments of Steiner $4$-chains and express the Pedoe feasibility criterion \cite{ped} in terms of the first two bending moments (Theorem 2), which plays a crucial role in our approach. In the fourth section we give an analytic expression for the radii of Steiner circles (Theorem 4) which implies an important result from \cite{yiu} (Corollary 7) necessary for our purposes. In Section 5 we give an algorithmic solution for the aforementioned {\it feasibility problem for radii of Steiner $4$-chains} studied in \cite{kir} and \cite{bidi}. The last section contains several remarks on possible generalizations and research directions suggested by our results. \\
	
	\newpage
	\section{Background on Steiner chains}\label{sec2}
	
	As usual the term {\it Steiner $n$-chain} refers to a sequence of non-intersecting circles $\delta_i, i=1, \ldots, n,$ in Euclidean plane such that the circles with adjacent indices $mod\, n$ are externally tangent, each of $\delta_i$ is externally tangent to a fixed circle $\gamma$ and internally tangent to another fixed circle $\Gamma$ containing $\gamma$ in its interior disk (see Fig.1 for $n=4$). The nested circles $\gamma$ and $\Gamma$ are called the {\it parent circles} of the Steiner chain considered \cite{ped, scta}. If the inner circle $\gamma$ contains the center $\Omega$ of the outer circle $\Gamma$, this pair of parent circles is called {\it exact}. For such pairs of parent circles, certain results in the sequel take especially simple form. Steiner chains is a classical topic discussed in many papers, in particular, in the context of {\it Steiner porism} (see, e.g., \cite{ped,scta,bisa,bidi}) which is in the focus of our discussion.
	
	\begin{figure}[htbp]
		\centering
		\includegraphics[width=0.5\textwidth]{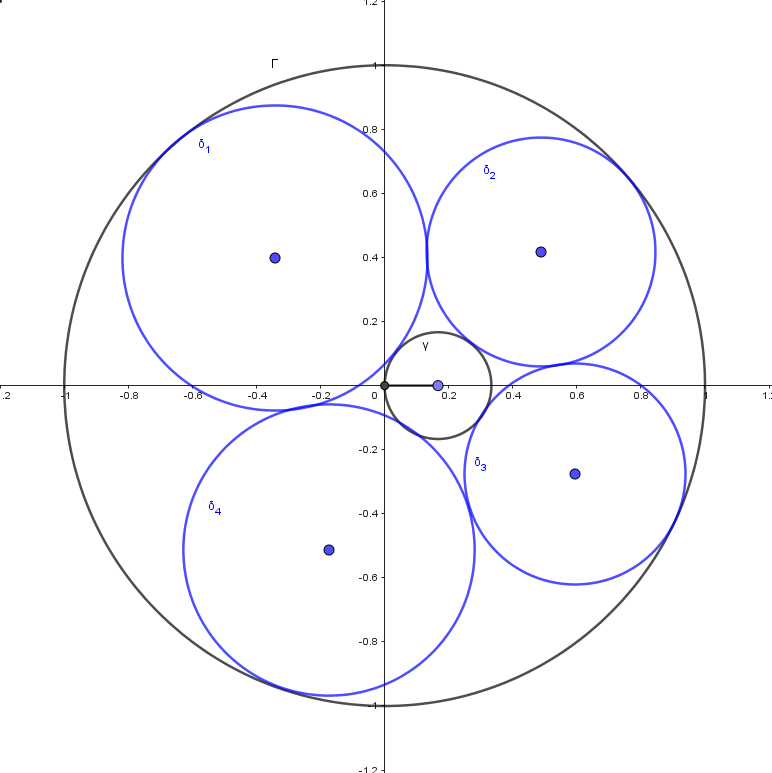}
		\caption{A Steiner $4$-chain}
		\label{graph1}
	\end{figure} 
	
	The aforementioned Steiner porism states that there exists a one-dimensional family of Steiner $n$-chains having the same parent circles \cite{ber,ped}. By a way of analogy with the Poncelet porism \cite{ber} a collection of Steiner $n$-chains with the fixed parent circles will be called {\it poristic Steiner $n$-chains}. This setting suggests, in particular, several extremal problems for poristic Steiner $n$-chains with the fixed parent circles. In particular, a natural extremal problem concerned with the sum of areas of poristic Steiner $4$-chains was studied in a prize-winning paper of K.Kiradjiev \cite{kir}. The results of K.Kiradjiev have been generalized in a joint paper of G.Bibileishvili and A.Diakvnishvili \cite{bidi}. The papers \cite{bidi,kir} as well as the already mentioned paper of D.Mathews and O.Zymaris \cite{mazy} served as an impetus for the research which led to the results given in the present paper. \\
	
	We add that triples of pairwise externally tangent circles, sometimes called "kissing circles", were historically the first source of Steiner chains. Such triples always have parent circles and give rise ot Steiner porism of order three. The famous Descartes' circle theorem enables one to compute the curvatures of the parent circles in terms of the curvatures of the three given "kissing circles" \cite{scta}. In particular, any triangle $\triangle$ in Euclidean plane yields a triple of kissing circles with the centers at its vertices. This circles are sometimes called the {\it Soddy circles} of triangle $\triangle$. Their radii are readily expressed through the sides of triangle $\triangle$. So one can compute all invariants of the arising poristic Steiner chains in terms of the sides of $\triangle$. 
	
	\newpage
	\section{Pedoe relations in terms of bending invariants}\label{sec3}

Let $n\geq 3$ be a natural number and $(R, r, d)$ with $R>r$ be a triple of positive numbers satisfying the relation 
	
\begin{equation} \label{pedoe-n}
	d^2 =(R-r)^2-4qRr,
\end{equation}
where $q=\tan^2(\frac{\pi}{n})$. The inequality $(R-r)^2-4qRr>0$ is called $n$-th Pedoe condition. It is known that such a triple defines a poristic system of Steiner $n$-chains with the parent circles having radii $R, r$ and the distance between their centers equal to $d$ (see, e.g., \cite{ped}). We will say that this is a pair of parent circles of order $n$ with the gauge $(R,r,d)$. The number $d$ will be called the {\it gap} of parent circles. Denote by $a= \frac{1}{r}, A=-\frac{1}{R}$ the so-called {\it bends} (signed curvatures) of the parent circles. Following \cite{scta}, for each Steiner $n$-chain $G$ and natural number $k$, we define the $k$-th {\it bending moment} $I_k^{(n)}(G)$ of chain $G$ as
\begin{equation} \label{bendingmoments1}
	I_k^{(n)}(G) = \sum_{j=1}^n b_j^k,
\end{equation}
where $b_j$ are the bends of the circles in $G.$ As was shown in \cite{scta}, the first $n-1$ bending moments $I_k^{(n)}(G)$ are invariant in a poristic family of poristic Steiner $n$-chains with the given parent circles, so we denote them simply by $I_k^{(n)} = I_k^{(n)}(R,r,d)$ and call $I_k^{(n)}$ the {\it invariant bending moments} of poristic Steiner chains. Clearly, they can be expressed through radii $R, r$ of the parent circles or, equivalently, through their bends (signed curvatures) $A, a$. For $n=4$, the explicit formulas for the invariant bending moments $I_j^{(4)}, j=1,2,3,$ have been obtained in \cite{bidi} using some general results given in \cite{yiu}. It should be noted that this task was not considered neither in \cite{scta}, nor in \cite{yiu}. \\

For further use we reproduce the explicit formulas for the bending invariants of Steiner $4$-chains given in \cite{bidi}.

	\begin{thm} \label{invariants}
	For a given triple $(R, r, d)$ satisfying the fourth Pedoe relation, the three bending invariants are given by 
	
	\begin{equation}
		I_1^{(4)} = \frac{2(R-r)}{Rr},
	\end{equation}
	
	\begin{equation}
		I_2^{(4)} = \frac{3R^2 - 10Rr + 3r^2}{2R^2r^2},
	\end{equation}
	
	\begin{equation}
		I_3^{(4)} = \frac{5R^3 - 27R^2r + 27Rr^2 - 5r^3}{4R^3r^3}.
	\end{equation}
	\end{thm}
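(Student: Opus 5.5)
The plan is to use the invariance of the first three bending moments along a poristic family, which we take from \cite{scta} as recalled above. Because $I_1^{(4)},I_2^{(4)},I_3^{(4)}$ do not depend on the chain, it suffices to compute $\sum_j b_j^k$, $k=1,2,3$, for one well-chosen Steiner $4$-chain with the given parent circles. We take the chain that is symmetric about the line of centers.

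Place $\Gamma$ with center at the origin and radius $R$, and $\gamma$ with center $(d,0)$ and radius $r$. The symmetric chain contains two circles centered on the $x$-axis. One fills the segment from $-R$ to $d-r$ and has bend $b_1=\frac{2}{R-r+d}$. The other fills the segment from $d+r$ to $R$ and has bend $b_2=\frac{2}{R-r-d}$. The remaining two circles are mirror images of each other in the $x$-axis, so they have a common bend $b$. The fourth Pedoe relation with $q=\tan^2(\pi/4)=1$ gives $(R-r)^2-d^2=4Rr$. Hence
\[
b_1+b_2=\frac{4(R-r)}{(R-r)^2-d^2}=\frac{R-r}{Rr},\qquad b_1b_2=\frac{4}{(R-r)^2-d^2}=\frac{1}{Rr},
\]
so all symmetric functions of $b_1,b_2$ depend only on $R$ and $r$, and $d$ is eliminated.

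Next we determine $b=1/\rho$. Let the upper off-axis circle have center $(x,y)$ with $y>0$. Tangency to $\Gamma$ and $\gamma$ gives $x^2+y^2=(R-\rho)^2$ and $(x-d)^2+y^2=(r+\rho)^2$. Subtracting these expresses $x$ linearly in $\rho$. External tangency to the axis circle centered at $(d+r+\rho_2,0)$, where $\rho_2=1/b_2$, then gives one more equation. Eliminating $x,y$ leaves an equation for $\rho$. After substituting $d^2=(R-r)^2-4Rr$, we expect it to yield
\[
b=\frac{R-r}{2Rr}.
\]
This value is consistent with the claimed formula for $I_1^{(4)}$, since $b_1+b_2+2b=\frac{2(R-r)}{Rr}$. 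Tangency to the other axis circle must then hold automatically, and this serves as a check on the computation. As a cleaner alternative, one could invert in a circle centered at a limiting point of the coaxal pair $\Gamma,\gamma$. This makes the parent circles concentric and the chain a regular ring of four equal circles, after which the bends are pulled back by the inversion formula. If the direct elimination becomes messy, this is the route I would take.

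With $b_1+b_2=\frac{R-r}{Rr}$, $b_1b_2=\frac1{Rr}$ and $b=\frac{R-r}{2Rr}$ in hand, the remaining moments follow from Newton's identities:
\[
b_1^2+b_2^2=(b_1+b_2)^2-2b_1b_2,\qquad b_1^3+b_2^3=(b_1+b_2)^3-3b_1b_2(b_1+b_2).
\]
Then $I_2^{(4)}=b_1^2+b_2^2+2b^2$ and $I_3^{(4)}=b_1^3+b_2^3+2b^3$. Putting everything over the common denominators $2R^2r^2$ and $4R^3r^3$ should give the stated numerators $3R^2-10Rr+3r^2$ and $5R^3-27R^2r+27Rr^2-5r^3$. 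The main obstacle is the computation of $b$, that is, carrying out the elimination and using the Pedoe relation to reduce the resulting equation to the simple closed form above. Everything else is routine symmetric-function algebra.
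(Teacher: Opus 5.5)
\textbf{Verdict.} Your plan is right and matches the paper's route, but it is not yet a proof: the off-axis bend $b$ is asserted, not derived. The paper quotes Theorem~\ref{invariants} from \cite{bidi}. It indicates the formulas come from the axial symmetric chain, with the off-axis bend supplied by Yiu's neighbour relation (Corollary~\ref{c7}).

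Your symmetric-function algebra is exact. From $b_1+b_2=\frac{R-r}{Rr}$, $b_1b_2=\frac{1}{Rr}$ and $b=\frac{R-r}{2Rr}$ the numerators come out as $3(R-r)^2-4Rr$ and $5(R-r)^3-12Rr(R-r)$, which are the stated ones. You only ``expect'' the value of $b$. Its agreement with the claimed $I_1^{(4)}$ is not evidence, since that formula is part of what is being proved.

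\textbf{Completing the missing step.} It is short, and $y$ never has to be eliminated. Subtracting the tangency equations pairwise gives $x$ linearly in $\rho$ in two ways.

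From $\Gamma$ and $\gamma$, after inserting $d^2=R^2-6Rr+r^2$, you get $dx=R(R-3r)-(R+r)\rho$. From $\Gamma$ and the right axial circle (center $c=\frac{R+r+d}{2}$, radius $R-c$) you get $x=R-\frac{(3R-r-d)\rho}{R+r+d}$. Equating, clearing denominators and using the Pedoe relation on both sides gives
\[
2(R-r)(d-R+r)\,\rho=R(R+r+d)(d-R+3r)=4Rr(d-R+r).
\]
Since $d<R-r$, the common factor cancels, so $\rho=\frac{2Rr}{R-r}$. For $d=0$ the four circles are congruent with radius $\frac{R-r}{2}=\frac{2Rr}{R-r}$, so nothing is lost.

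\textbf{The paper's shortcut.} In the paper's framework this value is immediate from Corollary~\ref{c7}. The roots of (\ref{yiuquadratics}) are the bends of the two neighbours. They are bends, not radii, as a dimension count of the coefficients shows. Their sum $-\beta/\alpha=\frac{R-r}{Rr}$ does not depend on $r_0$. The discriminant equals $64R^3r^3r_0^2\bigl((R-r)r_0-r_0^2-Rr\bigr)$, which vanishes exactly at $r_0=\frac{R-r\pm d}{2}$. Hence both neighbours of an axial circle have bend $\frac{R-r}{2Rr}$. That route also shows that opposite circles in every poristic $4$-chain have bend sum $\frac{R-r}{Rr}$, which yields $I_1^{(4)}$ without choosing a special chain.
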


 The same invariants can now be easily expressed in terms of the bends of parent circles $A=-\frac{1}{R},a=\frac{1}{r}$ as follows. \cite{bidi,dikh}

\begin{cor} \label{c1}
\begin{equation} \label{moment41}
	I_1^{(4)} = 2(A+a),
\end{equation}

\begin{equation} \label{moment42}
	I_2^{(4)} = \frac{3A^2 + 10Aa + 3a^2}{2},
\end{equation}

\begin{equation} \label{moment43}
	I_3^{(4)} = \frac{5A^3 + 27A^2a + 27Aa^2 + 5a^3}{4}.
\end{equation}
\end{cor}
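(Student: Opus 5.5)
The corollary follows from Theorem \ref{invariants} by a direct substitution. We put $R=-1/A$ and $r=1/a$, equivalently $1/R=-A$ and $1/r=a$, and rewrite each formula as a polynomial in $1/R$ and $1/r$. Since $R>0$ and $r>0$, we have $A<0<a$, so every step of the substitution is legitimate.

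For $I_1^{(4)}$ we split the fraction:
\[
\frac{2(R-r)}{Rr}=2\Bigl(\frac1r-\frac1R\Bigr)=2(a+A).
\]
For $I_2^{(4)}$ we divide each term of the numerator by $R^2r^2$:
\[
\frac{3R^2-10Rr+3r^2}{2R^2r^2}=\frac12\Bigl(\frac{3}{r^2}-\frac{10}{Rr}+\frac{3}{R^2}\Bigr).
\]
Now $1/r^2=a^2$, $1/R^2=A^2$, and $-1/(Rr)=Aa$, so this equals $(3a^2+10Aa+3A^2)/2$. For $I_3^{(4)}$ we likewise divide by $R^3r^3$, obtaining
\[
\frac14\Bigl(\frac{5}{r^3}-\frac{27}{Rr^2}+\frac{27}{R^2r}-\frac{5}{R^3}\Bigr).
\]
We then substitute $1/r^3=a^3$, $-1/(Rr^2)=Aa^2$, $1/(R^2r)=A^2a$ and $-1/R^3=A^3$, which gives $(5a^3+27Aa^2+27A^2a+5A^3)/4$.

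There is no real obstacle. The only point needing care is the sign bookkeeping: each odd power of $1/R$ picks up a minus sign, and this is exactly what turns the alternating signs in Theorem \ref{invariants} into the all-positive symmetric polynomials in $A,a$ stated in the corollary. As a consistency check, each resulting expression is symmetric under $A\leftrightarrow a$.
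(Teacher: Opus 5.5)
Your proposal is correct and follows the paper's own route: the paper gets the corollary directly from Theorem~\ref{invariants} by substituting $A=-1/R$ and $a=1/r$, exactly as you do. Your term-by-term expansion and your sign handling for the odd powers of $1/R$ are right in all three formulas.
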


As was observed in \cite{bidi}, the above formulas can be used to express the bends of parent circles of a Steiner $4$-chain through the first two bending invariants $I_1, I_2$. Below we give the corresponding explicit formulas and establish the exact areal of the pairs $I_1, I_2$ for Steiner $4$-chains, which is the first main result of this paper. \\

Namely, formally expressing the bends $A, a$ from the equations (\ref{moment41}),
(\ref{moment42}) we get the following expressions:

\begin{cor} \label{c2}
	The following formulas hold for the admissible vales of $I_1, I_2$ 
	
	\begin{equation}\label{a}
		a=\frac{I_{1}}{4}+\frac{1}{2}\sqrt{I_{1}^{2}-2I_{2}},
	\end{equation}
	\begin{equation}\label{A}
		A=\frac{I_{1}}{4}-\frac{1}{2}\sqrt{I_{1}^{2}-2I_{2}}.
	\end{equation}        
\end{cor}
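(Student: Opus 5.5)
We solve the system (\ref{moment41})--(\ref{moment42}) of Corollary \ref{c1} for the unknowns $A$ and $a$, using symmetric functions. Set $s=A+a$ and $p=Aa$. Equation (\ref{moment41}) gives $s=I_1/2$. Since $3A^2+3a^2=3s^2-6p$, equation (\ref{moment42}) becomes
\begin{equation*}
2I_2 = 3s^2+4p ,
\end{equation*}
and therefore
\begin{equation*}
p = \frac{2I_2-3s^2}{4} = \frac{2I_2}{4}-\frac{3I_1^2}{16}.
\end{equation*}

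So $A$ and $a$ are the two roots of $t^2 - st + p = 0$, namely
\begin{equation*}
t=\frac{s}{2}\pm\frac12\sqrt{s^2-4p}.
\end{equation*}
The discriminant simplifies to
\begin{equation*}
s^2-4p = \frac{I_1^2}{4}+\frac{3I_1^2}{4}-2I_2 = I_1^2-2I_2 .
\end{equation*}
Hence the two roots are $\frac{I_1}{4}\pm\frac12\sqrt{I_1^2-2I_2}$.

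It remains to decide which root is $a$ and which is $A$. By definition $a=1/r>0$ and $A=-1/R<0$, so $a>A$. Therefore $a$ is the root with the plus sign and $A$ the root with the minus sign, which gives (\ref{a}) and (\ref{A}).

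The only point requiring care is the admissibility of the radicand. On genuine Steiner data, $I_1^2-2I_2=(a-A)^2=(1/r+1/R)^2>0$, so the square root is real and positive, and taking its positive branch is consistent with $a>A$. This is what ``admissible values of $I_1, I_2$'' means in the statement. A precise description of the admissible region, incorporating the fourth Pedoe condition, is left to the subsequent theorem.
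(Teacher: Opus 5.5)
Your proof is correct and takes the paper's route. After multiplying by $16$, your quadratic $t^2-st+p=0$ is exactly the paper's $16a^2-8I_1a+(8I_2-3I_1^2)=0$, obtained from (\ref{moment41})--(\ref{moment42}), and your sign choice via $a>0>A$ is the paper's remark that the signs are dictated by the bends being signed curvatures; your check that $I_1^2-2I_2=(a-A)^2>0$ on genuine data just makes the branch choice more explicit than the paper does.
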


Notice that the choice of the signs in the above formulas is dictated by the above definition of bends as signed curvatures. This done, we need to take into account the feasibility domain of these expressions and $4$-th Pedoe relation, which yields the following result. 

\begin{thm} \label{areal1}
	The areal of all feasible pairs $I_1, I_2$ for Steiner $4$-chains coincides with the following domain
       \begin{equation}\label{invariantsareal}
              \{I_1 > 0, I_2 > 0, \frac{8}{3} I_2 < I_{1}^{2} < 4I_{2}\}.
        \end{equation}         
\end{thm}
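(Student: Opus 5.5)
The plan is to reduce everything to a single scale-free parameter. For $n=4$ we have $q=\tan^2(\pi/4)=1$, so the fourth Pedoe condition reads $(R-r)^2-4Rr>0$. I will take it strictly, because the statement has strict inequalities. In terms of $t=r/R=-A/a\in(0,1)$ this becomes $t^2-6t+1>0$, i.e.
\[
0<t<t_0:=3-2\sqrt2 .
\]
Thus the feasible parent pairs are exactly $\{(a,A): a>0,\ A=-ta,\ 0<t<t_0\}$, and by the Steiner porism each such pair really carries Steiner $4$-chains. The areal of $(I_1,I_2)$ is the image of this set under the map of Corollary \ref{c1}. I will show inclusion in both directions.

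\textbf{Necessity.} Substitute $A=-ta$ into (\ref{moment41}) and (\ref{moment42}):
\[
I_1=2a(1-t),\qquad I_2=\tfrac{a^2}{2}\,(3t^2-10t+3).
\]
Since $t<1$, we get $I_1>0$. The roots of $3t^2-10t+3$ are $1/3$ and $3$, and $t_0<1/3$, so $I_2>0$. The ratio
\[
\frac{I_1^2}{I_2}=f(t):=\frac{8(1-t)^2}{3t^2-10t+3}
\]
does not depend on $a$. I will check two endpoint values and monotonicity:
\begin{itemize}
\item $f(0)=8/3$.
\item At $t=t_0$ we have $(1-t)^2=4t$ and $3t^2-10t+3=3(t^2-6t+1)+8t=8t$, so $f(t_0)=4$.
\item $f$ is strictly increasing on $[0,t_0]$. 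The derivative of $(1-t)^2/(3t^2-10t+3)$ has numerator $(1-t)\bigl[-2(3t^2-10t+3)-(6t-10)(1-t)\bigr]=(1-t)\cdot 4(1-t)$, up to a short recomputation that I will carry out. This is positive for $t<1$.
\end{itemize}
Hence $8/3<I_1^2/I_2<4$ for every $t\in(0,t_0)$.

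\textbf{Sufficiency.} Take $(I_1,I_2)$ in the domain (\ref{invariantsareal}). Since $I_1^2>\tfrac83 I_2>2I_2$, the square root in Corollary \ref{c2} is real and positive, and I define $a,A$ by (\ref{a}) and (\ref{A}).
\begin{itemize}
\item $a>0$, because $I_1>0$.
\item $A<0$, because this is equivalent to $I_1^2/16<(I_1^2-2I_2)/4$, i.e. to $I_1^2>\tfrac83 I_2$.
\end{itemize}
Set $R=-1/A$, $r=1/a$ and $t=-A/a\in(0,1)$. By construction these bends reproduce $I_1,I_2$ through Corollary \ref{c1}. The identity above then gives $f(t)=I_1^2/I_2<4$, and since $f$ is increasing with $f(t_0)=4$, this forces $t<t_0$. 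We must also exclude the other preimage branch $t\in(t_0,1)$. There I will check that $f$ either stays $\ge4$ or the sign constraints already violated rule it out. Alternatively, one can verify directly that $I_1^2<4I_2$ is algebraically equivalent to $A^2+6Aa+a^2>0$, which is exactly the Pedoe relation in bends. So the Pedoe condition holds, and we obtain a genuine pair of parent circles with gap $d=\sqrt{(R-r)^2-4Rr}$ that realizes $(I_1,I_2)$.

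\textbf{Main obstacle.} The delicate step is the careful bookkeeping of the monotonicity and branch choice: showing that the sign choice in Corollary \ref{c2} together with $I_1^2<4I_2$ is exactly equivalent to the strict Pedoe inequality, rather than to its mirror branch $t>1/t_0$. I would handle this most cleanly by computing $4I_2-I_1^2$ directly in terms of $A$ and $a$.
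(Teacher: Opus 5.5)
Your proof is correct and more complete than the paper's, though only half of it follows the paper's route.

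\textbf{The paper's route.} The paper works entirely in the $(I_1,I_2)$-plane. It substitutes the inverse formulas of Corollary \ref{c2} into the Pedoe relation written in bends, obtaining $d^2=128(4I_2-I_1^2)/(3I_1^2-8I_2)^2$. It then reads off $I_1^2<4I_2$ from $d^2>0$ and $I_1^2>\frac83 I_2$ from $A<0$. It never derives $I_1>0$ (its squaring step silently assumes it), and it leaves the converse inclusion to Corollary \ref{c3}.

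\textbf{Sufficiency.} This half of your proof is the same computation: your identity $4I_2-I_1^2=2(A^2+6Aa+a^2)$ is exactly the numerator of the paper's formula for $d^2$. Make that identity the main argument rather than the monotonicity sketch. Combined with $A+a=I_1/2>0$, i.e. $t<1$, it excludes the mirror branch $t>3+2\sqrt2$ and gives the strict Pedoe inequality with no case analysis.

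\textbf{Necessity.} This half is genuinely different. Passing to the scale-free ratio $t=r/R$ and the single function $f(t)=I_1^2/I_2$ shows directly that $(0,t_0)$ is mapped onto $(8/3,4)$. It also makes the boundary transparent. The line $I_1^2=\frac83 I_2$ is the degenerate limit $t\to 0$ (outer bend $A\to 0$). The line $I_1^2=4I_2$ is exactly the concentric configuration $d=0$: four equal circles of radius $(R-r)/2$. That is a genuine Steiner chain, excluded only by the strict-Pedoe convention that you and the paper both adopt.

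\textbf{Two small fixes.} First, the derivative numerator is $(1-t)\cdot 4(1+t)=4(1-t^2)$, not $4(1-t)^2$; positivity on $[0,1)$ is unaffected. Second, $f$ has a pole at $t=1/3$, so monotonicity holds only on $[0,1/3)$. If you use it for sufficiency, first obtain $t<1/3$ from $I_2=\frac{a^2}{2}(3t^2-10t+3)>0$.
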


{\bf Proof}
Rewriting the fourth Pedoe relation in terms of $A,a$, we have following relation,

\begin{equation}\label{Pedoe}
	d^{2}=\frac{(A+a)^{2}+4Aa}{A^{2}a^{2}}.
\end{equation}

Then, if we insert equations (\ref{a}),(\ref{A}) in (\ref{Pedoe}) and simplify, we get 

\begin{equation}\label{d4}
	d^{2}=\frac{128\left(4I_{2}-I_{1}^{2}\right)}{9I_{1}^{4}-48I_{1}^{2}I_{2}+64I_{2}^{2}}.
\end{equation}

$d^{2}$ should be positive, so we can write the following inequality and solve,

$$\frac{128\left(4I_{2}-I_{1}^{2}\right)}{\left(3I_{1}^{2}-8I_{2}\right)^{2}}>0.$$ \
Hence
$$4I_{2}-I_{1}^{2}>0,$$ \
and finally we have
$$I_{1}^{2}<4I_{2}.$$ \\
Also we know that (\ref{A}) should be negative, so 

$$\frac{I_{1}}{4}-\frac{1}{2}\sqrt{I_{1}^{2}-2I_{2}}<0,$$

$$\frac{1}{2}\sqrt{I_{1}^{2}-2I_{2}}>\frac{1}{4}I_{1},$$
squaring both sides of the inequality, we get
$$\frac{1}{4}\left(I_{1}^{2}-2I_{2}\right)>\frac{1}{16}I_{1}^{2},$$
after some simplification, we get
$$\frac{3}{16}I_{1}^{2}-\frac{1}{2}I_{2}>0,$$
hence
$$3I_{1}^{2}-8I_{2}>0,$$
and finally, we get that,
$$I_{1}^{2}>\frac{8}{3}I_{2}$$ \\
The proof of Theorem \ref{areal1} is complete. \\

For convenience of referring in the sequel we present a relevant corollary of Theorem \ref{areal1}.

\begin{cor}\label{c3}
	For any pair $I_1, I_2$ such that  $\{I_1 > 0, I_2 > 0, \frac{8}{3} I_2 < I_{1}^{2} < 4I_{2}\}$, the pair $(A,a),$  where
	\begin{equation*}
	a=\frac{I_{1}}{4}+\frac{1}{2}\sqrt{I_{1}^{2}-2I_{2}},
\end{equation*}
\begin{equation*}
	A=\frac{I_{1}}{4}-\frac{1}{2}\sqrt{I_{1}^{2}-2I_{2}}.
\end{equation*}        
	satisfies the $4$-th Pedoe relation.
\end{cor}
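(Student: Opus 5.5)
The plan is to reverse the computation in the proof of Theorem~\ref{areal1}. Everything can be read off from the two elementary symmetric functions of $A$ and $a$. Recall that for $n=4$ we have $q=\tan^2(\pi/4)=1$. With $R=-1/A$ and $r=1/a$, the fourth Pedoe condition $(R-r)^2-4Rr>0$ is equivalent to the positivity of the numerator in (\ref{Pedoe}), namely $(A+a)^2+4Aa>0$. In addition we need the geometric side conditions $A<0<a$ (so that $R,r>0$) and $R>r$; the last is equivalent to $A+a>0$. So I will verify four things for the given pair: the square root is real, $a>0$, $A<0$, and the sum and product of $A,a$ give $A+a>0$ and $(A+a)^2+4Aa>0$.

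First, reality of the root. From $I_1^2>\frac83 I_2>2I_2$ (using $I_2>0$) we get $I_1^2-2I_2>0$. Hence $a$ and $A$ are real, and $a>0$ because $I_1>0$.

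Next, the sign of $A$. Since both sides are positive, squaring is reversible, so $A<0$ is equivalent to $\frac14(I_1^2-2I_2)>\frac1{16}I_1^2$, i.e.\ to $3I_1^2>8I_2$. This holds by hypothesis; it is exactly the chain of inequalities in the proof of Theorem~\ref{areal1}, read backwards.

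Then I compute by Vieta:
\[
A+a=\frac{I_1}{2}>0,\qquad Aa=\frac{I_1^2}{16}-\frac{I_1^2-2I_2}{4}=\frac{8I_2-3I_1^2}{16}.
\]
The first gives $R>r$. The second gives
\[
(A+a)^2+4Aa=\frac{I_1^2}{4}+\frac{8I_2-3I_1^2}{4}=\frac{4I_2-I_1^2}{2},
\]
which is positive precisely because $I_1^2<4I_2$. Setting $d^2=\frac{(A+a)^2+4Aa}{A^2a^2}$, consistent with (\ref{d4}), then defines a positive gap $d$, so $(R,r,d)$ satisfies the fourth Pedoe relation.

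Finally, I would note that $d<R-r$ follows automatically from $d^2=(R-r)^2-4Rr$. Thus the inner circle indeed lies inside the outer one, as required for parent circles.

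There is no real obstacle here. The only points needing care are that squaring is reversible in the $A<0$ step, which holds because both sides are positive, and that each hypothesis of the domain (\ref{invariantsareal}) is used exactly once: $I_2>0$ for reality of the root, $\frac83I_2<I_1^2$ for $A<0$, $I_1>0$ for $a>0$ and $R>r$, and $I_1^2<4I_2$ for the Pedoe inequality.
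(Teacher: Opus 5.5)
Your proposal is correct and is essentially the paper's argument: Corollary~\ref{c3} is the proof of Theorem~\ref{areal1} read in reverse. There, $d^2$ is given by (\ref{d4}), whose positivity is equivalent to $I_1^2<4I_2$, and $A<0$ is equivalent to $3I_1^2>8I_2$. Your use of Vieta ($A+a=I_1/2$, $Aa=(8I_2-3I_1^2)/16$) is a tidier route to (\ref{d4}), and your explicit checks that the root is real, $a>0$, $R>r$, and that the squaring step is reversible fill in details the paper leaves implicit.
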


As was mentioned in \cite{scta}, all bending invariants are symmetric polynomials of parent bends $A, a$, which implies the following conclusion underlying our approach.

\begin{cor}
All bending invariants are algebraically expressible through $I_1, I_2$.
\end{cor}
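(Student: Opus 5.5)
The plan is to reduce the statement to the fundamental theorem on symmetric polynomials, applied to the two parent bends $A$ and $a$. By the result of \cite{scta} recalled just before the statement, each invariant bending moment $I_k^{(4)}$ is a symmetric polynomial in $A$ and $a$. In the case $n=4$ this is also visible directly from Corollary \ref{c1} for $k=1,2,3$. By the fundamental theorem on symmetric polynomials, every such $I_k$ is a polynomial, with rational coefficients, in the elementary symmetric functions $\sigma_1=A+a$ and $\sigma_2=Aa$. It therefore suffices to express $\sigma_1$ and $\sigma_2$ themselves through $I_1$ and $I_2$.

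This is the key step, and it is purely linear. From (\ref{moment41}) we get $\sigma_1=I_1/2$. Next, rewrite (\ref{moment42}) as
\[
I_2=\frac{3(A+a)^2+4Aa}{2}=\frac{3\sigma_1^2+4\sigma_2}{2}.
\]
Solving for $\sigma_2$ gives $\sigma_2=\frac{2I_2-3\sigma_1^2}{4}=\frac{8I_2-3I_1^2}{16}$. Both $\sigma_1$ and $\sigma_2$ are thus polynomials in $I_1,I_2$, with no square roots needed. Substituting into the symmetric-polynomial representation expresses every $I_k$ as a polynomial in $I_1,I_2$.

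As a consistency check I would also note the alternative route. Corollary \ref{c2} gives $A,a$ explicitly as algebraic functions of $I_1,I_2$, valid on the domain of Theorem \ref{areal1}. The $\pm\sqrt{I_1^2-2I_2}$ terms cancel in any symmetric expression. As an illustration, I would verify $I_3$ from (\ref{moment43}), which reads $\frac{5\sigma_1^3+12\sigma_1\sigma_2}{4}$, and then becomes an explicit polynomial in $I_1,I_2$.

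The only genuine obstacle is the input that $I_k$ is a symmetric polynomial in $(A,a)$ for every $k$, and not merely for $k\le 3$. For $n=4$, only $I_1,I_2,I_3$ are guaranteed to be poristic invariants. So I would state the corollary for the invariant moments and rely on \cite{scta} for their symmetric polynomial form. If one wants to avoid citing this, the fallback is to invoke Corollary \ref{c1} and treat $k\le 3$ directly, which the above computation already covers.
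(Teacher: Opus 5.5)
Your proposal is correct, and for $n=4$ it is complete. The identity $I_2=\frac{3\sigma_1^2+4\sigma_2}{2}$ and the inversion $\sigma_1=I_1/2$, $\sigma_2=\frac{8I_2-3I_1^2}{16}$ are right; they reproduce the quadratic (\ref{parentradii}). Your formula $I_3=\frac{5\sigma_1^3+12\sigma_1\sigma_2}{4}$ then gives exactly $\frac34I_1I_2-\frac18I_1^3$, i.e.\ Theorem~\ref{thirdmomentrelation}.

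You share the paper's starting point, namely symmetric polynomiality in $(A,a)$ quoted from Schwartz--Tabachnikov, but you close the argument differently. The paper appeals to the algebraic dependence of any three polynomials in two variables. It obtains the explicit $n=4$ relation by substituting the square-root formulas of Corollary~\ref{c2}. You instead use the fundamental theorem on symmetric polynomials together with the triangular polynomial change of variables $(I_1,I_2)\leftrightarrow(\sigma_1,\sigma_2)$.

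Your route gives a stronger statement: each invariant is a \emph{polynomial} in $I_1,I_2$, with no branch choice and no restriction to the domain of Theorem~\ref{areal1}. It also makes explicit the algebraic independence of $I_1,I_2$. The dependence argument needs this but leaves it tacit, since a relation $P(I_1,I_2,I_k)=0$ only ``expresses'' $I_k$ if it genuinely involves $I_k$.

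The paper's dimension count, in turn, uses nothing about the specific form of $I_1^{(n)},I_2^{(n)}$, so it applies to every $n$ at once. Your inversion rests on Corollary~\ref{c1}. Homogeneity under scaling forces $I_1^{(n)}=c_n\sigma_1$ and $I_2^{(n)}=\alpha_n\sigma_1^2+\beta_n\sigma_2$, so for general $n$ you would additionally need $c_n\beta_n\neq0$.

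Your caveat that only $I_1,\dots,I_{n-1}$ are invariants, so the statement must be read for those, is the right reading.
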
 

More precisely, by a fundamental algebraic theorem on algebraic dependence of polynomials there exists an algebraic relation between $I_1, I_2$ and $I_3$. For $n=4$, such a relation is especially simple and follows from the formulas given in Theorem \ref{invariants} and corollary \ref{c2}. Namely, the third bending moment is expressed as a polynomial in the first two bending moments. The exact form of this relation is given below and will be used in the sequel. \cite{dikh}

\begin{thm} \label{thirdmomentrelation}
	With the above notations and assumptions one has:
	\begin{equation} \label{moment3byfirst2}
		I_{3}^{(4)}=\frac{3}{4}I_1I_2-\frac{1}{8}I_{1} ^{3}. 
	\end{equation}
\end{thm}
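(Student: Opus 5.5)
We propose to verify the identity directly from Corollary \ref{c1}, which expresses all three invariant bending moments as symmetric polynomials in the parent bends $A,a$. The natural route is through the elementary symmetric functions $s=A+a$ and $p=Aa$. From (\ref{moment41}) we have $I_1=2s$, so $s=I_1/2$. From (\ref{moment42}), writing $3A^2+10Aa+3a^2=3s^2+4p$, we get $I_2=\frac{3s^2+4p}{2}$, hence $p=\frac{2I_2-3s^2}{4}=\frac{8I_2-3I_1^2}{16}$. This inversion is exactly what Corollary \ref{c2} encodes, but working with $s,p$ avoids square roots entirely.

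Next we rewrite $I_3$ in terms of $s,p$. Using $A^3+a^3=s^3-3sp$ and $A^2a+Aa^2=sp$, the numerator in (\ref{moment43}) is
\begin{equation*}
5(s^3-3sp)+27sp=5s^3+12sp,
\end{equation*}
so $I_3=\frac{5s^3+12sp}{4}$. Substituting $s=I_1/2$ and $p=(8I_2-3I_1^2)/16$ gives
\begin{equation*}
I_3=\frac{1}{4}\left(\frac{5I_1^3}{8}+\frac{12 I_1(8I_2-3I_1^2)}{32}\right)=\frac{5I_1^3}{32}+\frac{3I_1I_2}{4}-\frac{9I_1^3}{32},
\end{equation*}
which simplifies to $\frac34 I_1I_2-\frac18 I_1^3$. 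This is (\ref{moment3byfirst2}).

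There is no real obstacle. The one point to check is that the relation is a polynomial identity in $A,a$, so it holds on the whole feasible domain of Theorem \ref{areal1} without any sign considerations. The only care needed is the coefficient bookkeeping in the power-sum to elementary-symmetric conversion. As a sanity check, we would verify the result numerically on a sample pair, for instance $A=-1$, $a=3$. Then $I_1=4$, $I_2=(3-30+27)/2=0$ and $I_3=(-5+81-243+135)/4=-8$, while the formula gives $\frac34\cdot 4\cdot 0-\frac18\cdot 64=-8$, so the two sides agree.
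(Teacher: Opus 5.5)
Your proof is correct and follows the same route as the paper. The paper substitutes the expressions (\ref{a}), (\ref{A}) for $a, A$ in terms of $I_1, I_2$ into (\ref{moment43}); you do the same substitution through $s=A+a$ and $p=Aa$ instead of the square-root formulas of Corollary \ref{c2}, which avoids radicals and makes it clear that the relation is a polynomial identity in $A,a$ needing no feasibility or sign assumptions.
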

{\bf Proof}
If we insert (\ref{a}) and (\ref{A}) in (\ref{moment43}), we get this algebraic relation.

\newpage
\section{Analytic expressions for the radii of Steiner circles} \label{sec 3}

In this section we give an analytic solution to Steiner $4$-porism and some of its consequences, which will be used to give an algorithgmic solution to the feasibility problem studied by K.Kiradjiev in \cite{kir}. \\

Let $(R,r,d)$ be a gauge of Steined $4$-porism. Recall that the term {\it axis of porism} refers to the straight line through the centers of parent circles. Obviously, this line is well defined only in non-concentric case, i.e. if $d\neq 0$. In concentric case any line though the common center of parent circles can be regarded as an axis of porism. Let us introduce a coordinate system with the origin at the center of the outer parent circle and $Ox$ axis along the axis of porism. Such a coordinate system will be called the {\it canonical coordinate system} of the porism considered. The main aim of this section is to give an exact formula for the radius $r(t)$ of poristic circle with the center $z(t)$ having the given polar angle $t$ in the canonical coordinate system. \\  
	
We will also need the following general property of poristic Steiner chains which is geometrically obvious and was rigorously proved in \cite{bidi}. 
	
	\begin{cor} \label{poristicrange1}
		For a given pair of parent circles with the gauge $(R, r, d)$, the minimal and maximal possible values of poristic circles $r_i$ are
		$$r_* = \frac{R - d - r}{2}, r^* = \frac{R + d - r}{2},$$
		while the minimal and maximal values of poristic curvatures are
		$$b_* = \frac{2}{R-d-r}, b^* = \frac{2}{R+d-r},$$
		respectively. For any $r \in [r_*, r^*],$ the poristic family contains a circle of radius $r$. 
	\end{cor}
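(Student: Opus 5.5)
The statement has three parts: the extremal radii $r_*$ and $r^*$, the extremal bends $b_*$ and $b^*$, and the fact that every intermediate radius is attained. I would prove them in that order, working in the canonical coordinate system with the center $\Omega$ of $\Gamma$ at the origin and the center $\omega$ of $\gamma$ at $(d,0)$.

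\textbf{Extremal radii.} A poristic circle $\delta$ of radius $\rho$ and center $z$ is internally tangent to $\Gamma$ and externally tangent to $\gamma$. This gives
\[
|z| = R-\rho, \qquad |z-\omega| = r+\rho .
\]
By the triangle inequality $\bigl||z|-|z-\omega|\bigr| \le d$, that is,
\[
|R-r-2\rho| \le d .
\]
Hence $\rho\in[\frac{R-r-d}{2},\frac{R-r+d}{2}]$. Equality holds exactly when $z$ lies on the axis of porism.

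\textbf{Attainment of the endpoints.} Take the two points of the axis where the annulus between $\gamma$ and $\Gamma$ is narrowest and widest. Placing a circle of diameter $R-d-r$, respectively $R+d-r$, in that gap gives a genuine circle tangent to both parent circles. By Steiner's porism (the $4$-th Pedoe relation holds), each such circle starts a full Steiner $4$-chain, so it belongs to the poristic family. Note that $R-d-r>0$ because $d<R-r$, which follows from the Pedoe relation since $q>0$.

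\textbf{Extremal bends.} The bend of $\delta$ is $1/\rho$, which is positive and decreasing in $\rho$. The bounds on the bends therefore follow immediately from the bounds on the radii, with the extremes interchanged: the largest bend $b_*=\frac{2}{R-d-r}$ corresponds to the smallest radius, and the smallest bend $b^*=\frac{2}{R+d-r}$ to the largest.

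\textbf{Intermediate values.} Consider the set of all circles tangent to both parent circles (internally to $\Gamma$, externally to $\gamma$). This set is a connected one-parameter family. One way to see it is to apply an inversion making $\gamma$ and $\Gamma$ concentric: the family becomes the circles of the annulus, parametrised continuously by an angle on $S^1$. The radius $\rho(t)$ depends continuously on this parameter, and it attains both $r_*$ and $r^*$ by the previous step. By the intermediate value theorem, every $r\in[r_*,r^*]$ is attained. Each circle in this family is tangent to both parents, so by the porism it belongs to some Steiner $4$-chain with these parent circles.

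\textbf{Main obstacle.} The delicate point is justifying connectedness of the family and continuity of $\rho$. The inversion argument handles it cleanly. Alternatively, one can parametrise $z$ directly: $z$ lies on the ellipse with foci $\Omega,\omega$ and sum of focal distances $R+r$. This ellipse is a connected curve on which
\[
\rho = R-|z|
\]
is continuous, which also gives the result.
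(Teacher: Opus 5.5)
Your proof is correct. It also differs from what the paper does. The paper gives no proof of this statement: it calls it geometrically obvious and defers to \cite{bidi}.

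What the paper does supply is the analytic solution of Theorem~\ref{poristicradius}. There the cosine rule in the triangle $\Omega\,\omega\,z(t)$ gives $r(t)$ explicitly as a function of the polar angle, and everything can be read off that formula at once. Its derivative with respect to $\cos t$ is $\frac{d\,(d^2-(R+r)^2)}{2(R+r-d\cos t)^2}<0$, so $r$ is strictly decreasing in $\cos t$. Hence $r(0)=r_*$, $r(\pi)=r^*$, and every intermediate radius is attained, exactly twice (at $\pm t$) when $d>0$. That exact count is what Corollary~\ref{polarangle} and the algorithm of Section~\ref{sec5} rely on.

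Your route is more synthetic:
\begin{itemize}
\item the reverse triangle inequality $|R-r-2\rho|\le d$ gives the bounds with no computation;
\item the two extremal circles are exhibited directly on the axis;
\item surjectivity follows from connectedness plus the intermediate value theorem.
\end{itemize}
What you give up is the uniqueness and symmetry information, since the intermediate value theorem gives existence only; the statement itself does not need it. Your ellipse variant is in substance the same parametrisation as the paper's cosine-rule formula. If you use it, note the converse direction explicitly: every point of the ellipse gives a genuine circle, because $|z|\le (R+r+d)/2<R$ forces $\rho>0$.

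You were also right to interchange the roles of the bends. As literally written, ``minimal and maximal \dots\ respectively'' has $b_*$ and $b^*$ reversed, since $2/(R-d-r)$ is the largest bend.
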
	
	
\begin{thm} \label{poristicradius}
\cite{dikh} The radius $r(t)$ of a poristic circle with the center having a given polar angle $t$ in the canonical coordinate system is given by:	
\begin{equation} \label{keyformula1}	
	r(t) = \frac{R^2 - 2dR\cos t + d^2 - r^2}{2(R + r - d\cos t)}.
\end{equation}
\end{thm}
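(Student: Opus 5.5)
The formula in Theorem \ref{poristicradius} involves only the two tangency conditions satisfied by a single poristic circle. So I will derive it directly from those conditions in the canonical coordinate system, without using the Pedoe relation (\ref{pedoe-n}) or the bending invariants.

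\emph{Coordinates.} Put the center of $\Gamma$ at the origin. Put the center of $\gamma$ at the point $(d,0)$ on the positive $Ox$ semi-axis; this orientation is what makes the sign of the term $-2dR\cos t$ come out as stated. Let $s=r(t)$ denote the radius of the poristic circle whose center $z(t)$ has polar angle $t$.

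\emph{Internal tangency to $\Gamma$.} This condition gives $|z(t)|=R-s$. Hence
\[
z(t)=(R-s)(\cos t,\sin t).
\]

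\emph{External tangency to $\gamma$.} This condition gives $|z(t)-(d,0)|=r+s$. Squaring,
\[
(R-s)^2-2d(R-s)\cos t+d^2=(r+s)^2 .
\]

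\emph{Solving for $s$.} After expanding, the $s^2$ terms cancel on both sides, so the equation is linear in $s$:
\[
R^2-2dR\cos t+d^2-r^2=s\,\bigl(2R+2r-2d\cos t\bigr).
\]

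\emph{Division is legitimate.} The factor $R+r-d\cos t$ is positive, because $d<R-r$ for nested parent circles. So we may divide by it, which yields (\ref{keyformula1}).

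\emph{Positivity.} To check that the resulting $s$ is positive, note that the numerator equals $|R e^{it}-d|^2-r^2$. This is positive because $\gamma$ lies strictly inside $\Gamma$: every point of $\Gamma$ is at distance at least $R-d>r$ from the center of $\gamma$.

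\emph{Consistency check.} At $t=0$ and $t=\pi$ the formula gives $\frac{R-d-r}{2}$ and $\frac{R+d-r}{2}$ respectively. These are exactly $r_*$ and $r^*$ from Corollary \ref{poristicrange1}, which confirms the orientation convention.

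\emph{Main obstacle.} The computation itself is routine; the only delicate point is the sign convention for the position of the inner center, which the check above settles. It should also be remarked that every circle tangent to both parent circles in this way belongs to some chain of the porism, by Steiner's porism. Hence the formula indeed describes the poristic circles, and it holds in the concentric case $d=0$ as well.
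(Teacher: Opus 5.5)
Your proof is correct and is essentially the paper's argument. Your squared-distance equation $|z(t)-(d,0)|^2=(r+s)^2$, combined with $|z(t)|=R-s$, is exactly the cosine rule in the triangle $\Omega\,\omega\,z(t)$ with angle $t$ at $\Omega$, which the paper then solves linearly for $r(t)$; your explicit placement of $\omega$ at $(d,0)$ and the check against $r_*,r^*$ also pin down the orientation convention the paper leaves implicit, and the coordinate form covers the degenerate cases $t\in\{0,\pi\}$ and $d=0$, where the paper's ``triangle'' collapses.
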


{\bf Proof.}
We apply the cosine rule to the triangle formed by the centers $\Omega$ and $\omega$ of parent circles and the center $z(t)$ of the poristic triangle considered. From the very definition of Steiner chain follows that the sides of this triangle are $R - r(t)$, $r + r(t)$ and $d$. The result follows by noticing that the side $\vert\omega z(t)\vert$ is opposite to the angle $t$ and resolving the cosine rule with respect to $r(t)$. \\
 
This result will be used in the sequel as well as the following corollary which follows by inverting equation (\ref{keyformula1}).

\begin{cor} \label{polarangle}
	For any $\rho \in [r_*, r^*],$ there exist exactly two poristic circles with the radii equal to $\rho$ and their polar angles $t_{\pm}(\rho)$ are given by  
	
	\begin{equation} \label{keyformula2}	
		t_{\pm}(\rho) = \pm \cos^{-1} \Big(\frac{R^2 - r^2 + d^2 - 2(R+r)\rho}{2d(R-r)}\Big).
	\end{equation}
\end{cor}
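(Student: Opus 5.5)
Since the statement is obtained by inverting the formula (\ref{keyformula1}) of Theorem \ref{poristicradius}, the plan is to treat $r(t)=\rho$ as an equation in $\cos t$ and check that the solution lies in $[-1,1]$ exactly when $\rho\in[r_*,r^*]$. We work in the non-concentric case $d>0$, since otherwise the axis of porism and the polar angle are not determined by the radius. The case $d=0$ can be noted separately: there every poristic circle has the same radius $\frac{R-r}{2}$ and $r_*=r^*$.

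First, set $r(t)=\rho$ in (\ref{keyformula1}) and clear the denominator. The denominator $R+r-d\cos t$ is positive because $d<R-r<R+r$ by the Pedoe relation (\ref{pedoe-n}). This gives
\begin{equation*}
2\rho(R+r)-2\rho d\cos t = R^2-r^2+d^2-2dR\cos t .
\end{equation*}
The equation is linear in $c=\cos t$. Collecting terms gives $2d(R-\rho)\,c = R^2-r^2+d^2-2(R+r)\rho$, so
\begin{equation*}
c=\frac{R^2-r^2+d^2-2(R+r)\rho}{2d(R-\rho)}.
\end{equation*}
The statement has $2d(R-r)$ in the denominator. So the key algebraic step is to compare this with (\ref{keyformula2}) and either reconcile the two expressions or identify the correct form.

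This is where I expect the main obstacle. A direct check at $\rho=r_*$ with my expression gives numerator $2d(R-\rho)$, hence $c=1$, which is consistent. The expression with $R-r$ in the denominator does not obviously give $c=\pm1$ at the endpoints. I would therefore verify the endpoint values carefully for both versions and present the correct inversion, noting the discrepancy if it persists.

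Next, show that $c\in[-1,1]$ precisely for $\rho\in[r_*,r^*]$. The map $c\mapsto r(c)$ is a Möbius function of $c$ with no pole in $[-1,1]$, hence monotone there. Its values at $c=1$ and $c=-1$ are
\begin{equation*}
\frac{(R-d)^2-r^2}{2(R+r-d)}=\frac{R-d-r}{2}=r_*,\qquad \frac{(R+d)^2-r^2}{2(R+r+d)}=\frac{R+d-r}{2}=r^*.
\end{equation*}
So every $\rho\in[r_*,r^*]$ gives a unique $c\in[-1,1]$, matching Corollary \ref{poristicrange1}.

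Finally, the equation $\cos t=c$ has the two solutions $t=\pm\cos^{-1}c$ in $(-\pi,\pi]$. These are distinct for interior $\rho$ and coincide only at the endpoints $\rho=r_*$ and $\rho=r^*$. This yields the two poristic circles, placed symmetrically with respect to the axis of porism.
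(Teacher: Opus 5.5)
Your approach is the paper's own. The paper justifies this corollary only by saying it ``follows by inverting equation (\ref{keyformula1})'', and your linear-in-$\cos t$ computation is exactly that inversion. You add the range argument the paper leaves implicit: $r$ is a M\"obius function of $c=\cos t$ with pole at $c=(R+r)/d>1$, taking the values $r_*$ and $r^*$ at $c=\pm 1$.

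Your algebra is right, and the discrepancy is an error in the statement, so state the corrected formula outright rather than hedging. The denominator must be $2d(R-\rho)$, which is nonzero because $\rho\le r^*<R$. The printed $2d(R-r)$ presumably confuses the inner parent radius $r$ with the poristic radius $\rho=r(t)$.

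A one-line check settles it. At $\rho=r_*$ the printed formula gives $\cos t=\frac{R+r+d}{2(R-r)}$, which equals $1$ only if $d=R-3r$. For a $4$-chain, however, $d^2=(R-3r)^2-8r^2$. So the printed $t_\pm(r_*)$ is nonzero, although the smallest poristic circle lies at $t=0$.

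The paper's Example 2 shows the same symptom: feeding the printed angle back into (\ref{keyformula1}) returns $1.239$ instead of $\rho=1$. In fact $1<\tilde r_*=1.07$ there, so the correct formula gives $\cos t>1$ and no such circle exists.

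With the corrected denominator your argument is a complete proof, including the caveats you already noted: there is only one circle for $\rho\in\{r_*,r^*\}$, and $d=0$ must be excluded.
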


The above formulas yield the so-called analytic solution to Steiner porism. We will also need some of their corollaries including the following general property of Steiner chains established in \cite{yiu}.

\begin{cor} \label{c7}
	The neighboring radii of a Steiner circle, in $4$-chains, having radius $r_0$ can be computed as the roots of the following quadrqtic equation:

\begin{equation} \label{yiuquadratics}
	\alpha x^2 + \beta x + \gamma,
\end{equation}
where
\begin{equation} \label{yiucoefficients}
	\begin{split}
		\alpha = 4R^2r^2r_{0}^{2}, \\
		\beta = -4Rrr_{0}^{2}(R-r), \\
		\gamma = [2Rr - (R-r)r_{0}]^2 + 4Rrr_{0}^{2}. \\
	\end{split}
\end{equation}
\end{cor}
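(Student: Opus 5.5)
The plan is to avoid the analytic parametrization of Theorem \ref{poristicradius} and use the Descartes circle theorem instead. The key geometric observation is this. Let $\delta_0$ be a Steiner circle of radius $r_0$ in a $4$-chain with parent circles $\Gamma,\gamma$. Each of its two neighbours in the chain is tangent to three circles: internally to $\Gamma$, externally to $\gamma$, and externally to $\delta_0$. Conversely, apart from $\Gamma$, $\gamma$ and $\delta_0$ themselves, exactly two circles complete the triple $\Gamma,\gamma,\delta_0$ to a Descartes quadruple. These are the two circles lying in the two curvilinear "gaps" on either side of $\delta_0$ inside the annulus. Since the chain closes up, these two circles are precisely the two neighbours of $\delta_0$. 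So the neighbouring bends are exactly the two roots of the Descartes equation, with $\Gamma,\gamma,\delta_0$ fixed. I will state this identification carefully, because it is the only point where the Steiner-chain structure enters.

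Next I write the Descartes relation with signed bends $A=-\frac1R$, $a=\frac1r$, $b_0=\frac1{r_0}$ and the unknown bend $x$:
\begin{equation*}
(A+a+b_0+x)^2=2(A^2+a^2+b_0^2+x^2).
\end{equation*}
Setting $s=A+a+b_0$ and $Q=A^2+a^2+b_0^2$, this becomes $x^2-2sx+(2Q-s^2)=0$. The substitution $x=1/\rho$ turns it into the quadratic $(2Q-s^2)\rho^2-2s\rho+1=0$ for the neighbouring radii $\rho$. I then clear denominators by multiplying by $R^2r^2r_0^2$, substitute the explicit values of $A,a,b_0$, and expand. This produces a quadratic with polynomial coefficients in $R,r,r_0$. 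Its roots are the neighbouring radii, or the neighbouring bends if one works in the variable $x$.

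The final step, and the one I expect to be the main obstacle, is matching this quadratic with the form $\alpha x^2+\beta x+\gamma$ and the coefficients (\ref{yiucoefficients}) as stated. That requires fixing the right variable (radius or bend, possibly rescaled) and the right overall normalization. If the Descartes coefficients do not match directly, I will use the fourth Pedoe relation $d^2=(R-r)^2-4Rr$ (here $q=\tan^2\frac{\pi}{4}=1$) to trade terms. This is the place where $n=4$ specifically enters. I will then compare $\beta/\alpha$ and $\gamma/\alpha$ with the sum and product of the roots.

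As an independent check on the normalization, I will verify the result on the symmetric configuration. There $r_0$ equals the extreme value $r_*$ from Corollary \ref{poristicrange1}, and by symmetry the two neighbours have equal radii, obtainable from Corollary \ref{polarangle} at polar angles $\pm\frac{\pi}{2}$ relative to the axis in the inverted concentric picture. So the discriminant must vanish there, and the double root must be the common radius. This test pins down which variable and scaling the coefficients (\ref{yiucoefficients}) refer to.
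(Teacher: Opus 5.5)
\textbf{The gap is your ``key geometric observation''.} Descartes' theorem needs four \emph{pairwise} tangent circles. The parent circles of a Steiner chain do not touch: $\gamma$ lies strictly inside $\Gamma$ with $d<R-r$. So in $\Gamma,\gamma,\delta_0,\delta_\pm$ only five of the six pairs are tangent, and this is never a Descartes quadruple. Identifying the neighbours as the two circles tangent to $\Gamma,\gamma,\delta_0$ is fine. But the relation $(A+a+b_0+x)^2=2(A^2+a^2+b_0^2+x^2)$ simply does not hold for them.

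You can see this against the target. The roots of the stated quadratic are the neighbouring \emph{bends}, not radii: $-\beta/\alpha=(R-r)/(Rr)=A+a$ is an inverse length. So you should work in $x$ itself, without substituting $x=1/\rho$. Your quadratic has root sum $2(A+a+b_0)$, while the stated one has root sum $A+a$. These never agree, since $A+a>0$ and $b_0>0$.

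The Pedoe relation cannot repair this. It only constrains $R,r,d$, whereas the discrepancy involves $b_0$, which varies along the porism. Moreover, $d$ does not even occur in your equation. In fact, the Descartes equation is exactly what tangent parents $d=R-r$ would give (the Pappus chain, $q=0$).

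Your planned axial check would expose the failure. At $r_0=r_*$ the Pedoe relation gives $b_0^2=(A+a)b_0+Aa$. Your roots are then $A+a+3b_0$ and $A+a-b_0$, with discriminant $16b_0^2\neq 0$. The true neighbours there both have bend $(A+a)/2$.

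What is missing is an input recording how $\gamma$ actually sits inside $\Gamma$. The paper gets this (following Yiu) from the analytic solution in Theorem \ref{poristicradius}. The poristic circle at polar angle $t$ has radius $r(t)$ and centre $(R-r(t))e^{it}$. One imposes tangency between two such circles and eliminates the angles.

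If you want to keep a Descartes-type argument, you must use the generalized Gram-matrix form. Normalize oriented circles so that $\langle C,C'\rangle=\frac{|c-c'|^2-\rho^2-\rho'^2}{2\rho\rho'}$ with signed radii. Then every circle has self-product $-1$, and tangent pairs have product $1$. By the $4$-th Pedoe relation,
\[
\langle\Gamma,\gamma\rangle=\frac{R^2+r^2-d^2}{2Rr}=1+2q=3 .
\]
Bends are pairings with a fixed null vector, so the vector $k=(A,a,b_0,x)$ satisfies $k^{T}G^{-1}k=0$, where
\[
G=\begin{pmatrix}-1&3&1&1\\3&-1&1&1\\1&1&-1&1\\1&1&1&-1\end{pmatrix}.
\]
This expands to $4x^2-4(A+a)x+4b_0^2-4(A+a)b_0+(A-a)^2=0$. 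Multiplying by $R^2r^2r_0^2$ gives exactly $\alpha x^2+\beta x+\gamma$ with the coefficients (\ref{yiucoefficients}), where $x$ is the neighbouring bend. So $n=4$ must enter through the entry $3$ of $G$, not by trading terms afterwards. Putting $1$ in that entry recovers precisely the standard Descartes equation you used, which is the wrong one here.
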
 

The proof follows directly from our analytic solution to Steiner porism. Corollary \ref{c7} immediately implies the next one.

\begin{cor}\label{c8}
	The sum of the neighboring radii is equal to $-\frac{\beta}{\alpha}.$
\end{cor}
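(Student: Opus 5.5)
Corollary \ref{c8} follows from Corollary \ref{c7} by Vieta's formulas; the only real work is to check that the two neighboring radii are exactly the two roots of the quadratic, with no spurious root. Corollary \ref{c7} says the two neighbors of the Steiner circle of radius $r_0$ in the $4$-chain are the roots of $\alpha x^2+\beta x+\gamma=0$, with $\alpha=4R^2r^2r_0^2>0$. Because $\alpha\neq 0$, the polynomial is a genuine quadratic. Vieta's formula then gives the sum of its roots as $-\beta/\alpha$, which is the claim.

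Concretely, I will substitute the coefficients (\ref{yiucoefficients}):
$$-\frac{\beta}{\alpha}=\frac{4Rrr_0^2(R-r)}{4R^2r^2r_0^2}=\frac{R-r}{Rr}=\frac1r-\frac1R = a + A .$$
So the sum of the neighboring radii does not depend on $r_0$. As a consistency check, it equals $\tfrac12 I_1^{(4)}$ by Theorem \ref{invariants}.

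The one point needing care is making sure the two roots are really the two neighbors, counted with multiplicity, rather than one neighbor plus an extraneous root. I will settle this from the analytic solution (Theorem \ref{poristicradius} and Corollary \ref{polarangle}). In a $4$-chain the two neighbors of the circle at polar angle $t_0$ are the poristic circles obtained by rotating by $\pm\pi/2$ in the inversive model where the parent circles are concentric. Eliminating the parameter then yields precisely a quadratic whose two roots are these radii, and this is how Corollary \ref{c7} was derived. When the two neighbors have equal radii, which happens for symmetric chains, the quadratic has a double root and the sum formula still holds with multiplicity. I expect no genuine obstacle: the substance is already contained in Corollary \ref{c7}, and what remains is Vieta's relation.
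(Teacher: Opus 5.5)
\textbf{Verdict: there is a genuine error.} Your Vieta step is exactly the paper's argument; the paper only says that Corollary~\ref{c7} immediately implies Corollary~\ref{c8}. The problem is the step you flag as needing care: the roots of $\alpha x^2+\beta x+\gamma$ are not the neighboring radii, and your own computation shows this.

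\emph{Homogeneity.} The coefficients $\alpha,\beta,\gamma$ are homogeneous of degrees $6,5,4$ in the lengths $R,r,r_0$. So a root $x$ scales like an inverse length, i.e.\ it is a curvature.

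\emph{Your consistency check.} You found $-\beta/\alpha=(R-r)/(Rr)=a+A=\tfrac12 I_1^{(4)}$, which is half the sum of the four bends. A sum of radii cannot equal that.

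\emph{A concrete counterexample.} Take Example~1, with radii $(\sqrt2/17,\sqrt2/9,\sqrt2,\sqrt2/9)$. The circle of radius $\sqrt2/17$ has neighbors whose radii sum to $2\sqrt2/9$. The circle of radius $\sqrt2/9$ has neighbors whose radii sum to $18\sqrt2/17$. So the sum of neighboring radii does depend on $r_0$, contrary to your claim, and neither value equals $-\beta/\alpha=9\sqrt2$. What equals $9\sqrt2$ in both cases is the sum of the neighbors' bends: $9/\sqrt2+9/\sqrt2=17/\sqrt2+1/\sqrt2$.

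\emph{Carrying out the elimination gives bends, not radii.} After inverting to concentric position, the bends of a $4$-chain are $b_k=C_0+C_1\cos(\psi+k\pi/2)$. Matching $I_1^{(4)}=4C_0$ and $I_2^{(4)}=4C_0^2+2C_1^2$ with Corollary~\ref{c1} and the Pedoe relation gives $C_0=\frac{R-r}{2Rr}$ and $C_1=\frac{d}{2Rr}$. The neighbors of $b_0=1/r_0$ are $C_0\mp C_1\sin\psi$. Hence they are the roots of $x^2-2C_0x+C_0^2-C_1^2+(b_0-C_0)^2=0$. Since $C_0^2-C_1^2=1/(Rr)$, multiplying by $4R^2r^2r_0^2$ gives exactly $\alpha x^2+\beta x+\gamma=0$, with $x$ a bend.

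\emph{What is actually true.} The sum of the neighboring bends is $-\beta/\alpha$; equivalently, $b_1+b_3=b_2+b_4$. The sum of the neighboring radii is $-\beta/\gamma$, since the radii are the roots of the reciprocal polynomial. The slip comes from the wording of Corollary~\ref{c7} in the paper. Still, as written your argument proves a false statement, and your consistency check was the place to catch it.
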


\begin{rem}	
Using Theorem \ref{poristicradius} and Corollary \ref{polarangle} one obtain several other results on Steiner chains. In particular, one can easily compute the (canonical) coordinates of the center $z(t)$ as follows. For simplicity, we only describe this procedure for $n=4$. As was already mentioned, the distances between $z(t)$ and the centers of the outer and inner parent circles are $R-r(t)$ and $r + r(t),$ respectively. Hence the abscissa and ordinate of $z(t)$ are equal to $(R - r(t)) \cos t$ and $(R - r(t)) \sin t$ respectively. As we know, the radii $r_{\pm}$ of two neighboring poristic circles can be found as the roots of quadratic polynomial (\ref{yiuquadratics}). Having the values of $r_{\pm}$, the polar angles of the centers $z_{\pm}$ of the two neighboring poristic circles are given by equation (\ref{keyformula2}). For $n=3, 4$, this yields explicit formulas for the centers and radii of all poristic circles as functions of $t,$ which in turn yields explicit formulas for invariants of Steiner chains introduced in \cite{lamawi}. For $n=3, 4$, the corresponding formulas were given in \cite{dikh}. For $n\geq 5,$ one can do the same inductively but the resulting formulas are more difficult to handle and will be discussed elsewhere.
\end{rem}  

A chain which is invariant with respect to the reflection in the axis of porism $L$ will be called a {\it symmetric chain}. Such chains play an important role in the sequel. \\

We notice that a Steiner chain with the center of one its circles belonging to the axis of porism $L$ is symmetric and will be called {\it axial}. Moreover, a chain such that a pair of its circles has the tangency point in $L$ is also symmetric and will be called a {\it lateral chain}. \\ 

\newpage
\section{Feasibility problem for Steiner $4$-chains}\label{sec5}
	
We conclude the paper by applying our results to the so-called feasibility problem for the radii of Steiner $4$-chains considered in \cite{kir} and \cite{bidi}. Recall that this problem is formulated as follows. Given an ordered quadruple of positive numbers $(r_1, r_2, r_3, r_4)$ one is asked to find out if there exists a Steiner $4$-chain with such radii of its circles in the given order. To the best of our knowledge, this natural problem was considered for the first time in \cite{kir}, where the author gave its solution in one concrete case but did not discuss the general case. Another approach to solving the same problem was outlined in \cite{bidi}. In this section we show that the approach described in \cite{bidi} can be simplified using Theorem \ref{thirdmomentrelation}. \\
	
As was mentioned in the Introduction, an analogous feasibility problem has been solved in \cite{mazy} in a more general context of the so-called {\it $n$-flowers} which are the building blocks of the so-called {\it circle packings} playing important role in complex analysis and computational geometry \cite{ste}. \\
	
Recall that the algorithm suggested in \cite{bidi} involved several steps. Notice that we can easily compute the virtual parent radii $(\tilde R, \tilde r)$. If $\tilde R^2 - 6 \tilde R \tilde r + \tilde r^2 > 0$ then the pair $(\tilde R, \tilde r)$ is a feasible candidate for the radii of parent circles having the distance between their centers equal to $\tilde d = \sqrt{\tilde R^2 - 6 \tilde R \tilde r + \tilde r^2 }.$ Next, one has to verify that the given values of radii $r_i$ belong to the segment $[\tilde r_*, \tilde r^*]$. The final step of the algorithm given in \cite{bidi} required computing the "virtual $b$-moments" and comparing them with the actual $b$-moments $b_i = 1/r_i$. \\
	
Next one had to compute the first three moments $(I_1, I_2, I_3)$ of the quadruple $b_i = 1/r_i$ called the {\it actual moments of bends}. Then assuming the existence of a sought Steiner $4$-chain one uses the equations (\ref{moment41}) and (\ref{moment42}) to find the "virtual radii" $(\tilde R, \tilde r)$ of the "virtual parent circles". To this end it is sufficient to solve the system
	$${2(a+A)= I_1, 3A^2 + 10Aa + 3a^2 = 2I_2},$$
	which reduces to the following quadratic equation
	\begin{equation} \label{parentradii}
		16a^2 - 8I_1a + (8I_2 - 3I_1^2) = 0.
	\end{equation}

In view of our results the first step can be substituted by verifying that the actual first bending moments belong to the areal described in Theorem \ref{areal1}, which is obviously simpler. In particular, if this relation is not satisfied then the given quadruple of positive numbers $r_i$ cannot be realized as the radii of any Steiner $4$-chain. Then we check that all given radii lie in the segment $[\tilde r_*, \tilde r^*]$ for the virtual gauge. If this is the case, then each  of the given radii is realized in one of the poristic circles. Finally we choose one of the given radii, say $r_1$, and compute the radii of its neighbours in the virtual porism defined by the actual bending invariants $(I_1, I_2)$. 
If they coincide with the neighbors of $r_1$ in the given quadruple then it is easy to show that the given quadruple is realized in the virtual porism defined above. \\

In other words, we obtain a rigorous feasibility criterion for the radii of Steiner $4$-chains, which was the ultimate goal of this note. \\

Obviously, our results simplify the algorithm given in \cite{bidi}. For illustrative purposes, we conclude this section by giving a few concrete examples of application of our criterion. \\

{\bf Example 1.}
If we are given a quadruple of radii from \cite{kir} $\rho=(\frac{\sqrt{2}}{17},\frac{\sqrt{2}}{9},\sqrt{2},\frac{\sqrt{2}}{9})$, then the bends are $\varkappa = (12.0208,6.3639,0.7071.6.3639).$ So $(I_1,I_2)=(25.456,226)$ and we see that (\ref{moment3byfirst2}) is satisfied. In order to check if these radii are unordered realizable, we have to use our algorithm step by step: \\

{\bf Step 1.}
Insert $(I_1,I_2)=(25.456,226)$ in (\ref{invariantsareal}) we get that
$$602.666<648.007<904$$
So the Pedoe condition is satisfied. \\

{\bf Step 2.}
Calculate using (\ref{moment41}) and (\ref{moment42}) the "virtual radii" $(\tilde R, \tilde r)$ of the "virtual parent circles", find maximal and minimal radii, then check that the given radii fit into this interval. Indeed, 
$$\tilde R=1.5719, \tilde r=0.0748$$ hence $$\tilde r_*=0.748, \tilde r^*=2.827$$
So  $\rho \in (\tilde r_*, \tilde r^*)$. \\

Now it is easy to verify the remaining conditions, which implies that this quadruple is feasible. In fact,
the exact realization of these quadruple is given in the page 273 of \cite{kir}. 

{\bf Example 2.}  If we are given a quadruple of radii $\rho = (1, 2, 3, 4)$, then the bends are $\varkappa = (1, 0.5, 0.33(3), 0.25).$ So $(I_1,I_2,I_3)=(2.083,1.4214,1.1766)$ and we see that (\ref{moment3byfirst2}) is not satisfied, to chek if these radii are inorderd realizable, we have to use mentioned algorithm step by step again: \\

{\bf Step 1.}
Insert $(I_1,I_2)=(2.083,1.4214)$ in (\ref{invariantsareal}) we get that
 $$3.7904<4.3388<5.6856$$
this is realizable. \\

{\bf Step 2.}
Calculate using (\ref{moment41}) and (\ref{moment42}) the "virtual radii" $(\tilde R, \tilde r)$ of the "virtual parent circles", find maximal and minimal radii, then check, are given radii in this interval or not.
$$\tilde R=10.99, \tilde r=0.88, \tilde d=7.97 $$ hence $$\tilde r_*=1.07, \tilde r^*=9.04$$
So  $\rho \in (\tilde r_*, \tilde r^*)$.\\

{\bf Step 3.}
If we insert $\rho=1$ and virtual parent circles' radii in (\ref{keyformula2}) we get
$$t =0.1304116639.$$
Then if we insert this $t$ in (\ref{keyformula1}), we get that,
$$r(t)=1.239036813.$$
So $r(t)\neq 1$ and these radii are not realizable. \\

Analogous applications to the feasibility problem can be derived from the main result of \cite{mazy}. It would be interesting to compare those necessary conditions with the ones given above but we do not dwell on these aspects here for the reason of space.

\newpage
\section{Concluding remarks} \label{sec 6}

First of all, it is natural to search for generalizations of our results to Steiner $n$-chains for arbitrary $n$. For small $n$, this seems feasible in the same way as above, using (\ref{yiuquadratics}) and the invariance of the first $n-1$ moments of curvatures \cite{scta}. For $n=6$, this is especially simple since one can find the invariant bending moments in a similar way by finding the bends in the axial symmetric $6$-chain, which is calculated also in \cite{bidi}. Specifically, using the symmetry of the axial chain we can find all the bends and summing their powers one can obtain explicit formulas for the first five invariant bending moments. The resulting formulas are rather lengthy and will be published elsewhere. \\

For arbitrary $n\geq 5$, our approach yields certain necessary conditions in terms of the first two bending invariants. To this end one can compute the first two invariants in terms of the bends of parent circle using an inversion which take the given parent circles into a apir of concentric ones. Explicit formulas for such an inversion in terms of the gauge $(R,r,d)$ are well known and can be found, in particular, in Section 3 of \cite{scta}. For a concentric pair of parent circles all bending moments are easily computable and one compute their counterparts in original porism using the explicit formulas mentioned above. Then one can express the bends $(A,a)$ of parent cirlcles through the first two invariants and find teh exact areal of the admissible pairs $(I_1, I_2)$ in the same way as in the proof of our Theorem 2. This obviously yields necessary conditions for the realizability of a gien sequence of positive numbers as the radii of circles in a certain Steiner chain.  \\

Next, one can use the explicit formulas for the invariant bending moments to compute the canonical coordinates of the centres of axial symmetric chain. As is easy to realize this reduces to computing the intersections of pairs of quadrics, which can be done explicitly. This in turn can be used to solve the feasibility problem for the invariants introduced in \cite{lamawi}. \\

Finally, it would be interesting to compare our results with the necessary conditions of solvability of DFP given in \cite{mazy}. For $n=4$, this might appear helpful for obtaining 
a criterion of solvability of DFP.

\end{document}